\theoremstyle{plain}
\newtheorem{theorem}{Theorem}[section]
\newtheorem{lemma}[theorem]{Lemma}
\newtheorem{proposition}[theorem]{Proposition}
\newtheorem{conjecture}[theorem]{Conjecture}
\theoremstyle{definition}
\newtheorem{definition}[theorem]{Definition}
\theoremstyle{remark}
\newtheorem*{claim}{Claim}
\newcommand{\cI}{\mathcal{I}}
\newcommand{\I}{\cI}
\newcommand{\cJ}{\mathcal{J}}
\newcommand{\J}{\cJ}
\newcommand{\cP}{\mathcal{P}}
\DeclareMathOperator{\sq}{sq}
\DeclareMathOperator{\rk}{rk}
\newcommand{\fin}{\mathrm{Fin}}
\newcommand{\Fin}{\mathrm{Fin}}
\newcommand{\CEX}{\mathcal{CEI}}
\begin{document}


\title{On a conjecture of Debs and Saint Raymond}


\author[A.~Kwela]{Adam Kwela}
\address[Adam Kwela]{Institute of Mathematics\\ Faculty of Mathematics\\ Physics and Informatics\\ University of Gda\'{n}sk\\ ul.~Wita  Stwosza 57\\ 80-308 Gda\'{n}sk\\ Poland}
\email{Adam.Kwela@ug.edu.pl}
\urladdr{http://kwela.strony.ug.edu.pl/}


\subjclass[2010]{Primary:  03E05, 03E15, 54H05; Secondary:  26A03.}


\keywords{Ideal, filter, Borel separation rank, Kat\v{e}tov order}


\begin{abstract}
Borel separation rank of an analytic ideal $\mathcal{I}$ on $\omega$ is the minimal ordinal $\alpha<\omega_{1}$ such that there is $\mathcal{S}\in\bf{\Sigma^0_{1+\alpha}}$ with $\mathcal{I}\subseteq \mathcal{S}$ and $\mathcal{I}^\star\cap \mathcal{S}=\emptyset$, where $\mathcal{I}^\star$ is the filter dual to the ideal $\mathcal{I}$. Answering in negative a question of G. Debs and J. Saint Raymond [Fund. Math. 204 (2009), no. 3], we construct a Borel ideal of rank $>2$ which does not contain an isomorphic copy of the ideal $\text{Fin}^3$. 
\end{abstract}


\maketitle


\section{Introduction}

A collection $\mathcal{I}$ of subsets of a set $X$ is called an \emph{ideal on $X$} if it is closed under subsets and finite unions of its elements. We assume additionally that $\mathcal{P}(X)$ (i.e., the power set of $X$) is not an ideal, and that every ideal contains all finite subsets of $X$ (hence, $X=\bigcup\I$). All ideals considered in this paper are defined on infinite countable sets. 

We treat the power set $\mathcal{P}(X)$ as the space $2^X$ of all functions $f:X\rightarrow 2$ (equipped with the product topology, where each space $2=\left\{0,1\right\}$ carries the discrete topology) by identifying subsets of $X$ with their characteristic functions. Thus, we can talk about descriptive complexity of subsets of $\mathcal{P}(X)$ (in particular, of ideals on $X$). 

For $A,B,S\subseteq\cP(X)$ we say that \emph{$S$ separates $A$ from $B$} if $A\subseteq S$ and $S\cap B=\emptyset$. Following G. Debs and J. Saint Raymond \cite{Debs}, for an analytic ideal $\I$ we define its \emph{Borel separation rank} by:
$$\rk(\mathcal{I})=\min\left\{\alpha<\omega_{1}: \textrm{ there is }\mathcal{S}\in\bf{\Sigma^0_{1+\alpha}} \textrm{ separating }\mathcal{I}\textrm{ from }\mathcal{I}^\star\right\},$$
where $\mathcal{I}^\star=\{A^c:\ A\in\I\}$ is the \emph{filter dual to the ideal $\mathcal{I}$} (actually, authors of \cite{Debs} use the dual notion of filters instead of ideals). In this paper by \emph{rank of $\I$} we mean $\rk(\mathcal{I})$. 

This article is motivated by a conjecture of G. Debs and J. Saint Raymond from 2009 concerning combinatorial characterization of ideals of a given rank. Before formulating the mentioned conjecture, we need to introduce some tools.

Let $\I$ and $\J$ be ideals. Then:
\begin{itemize} 
\item $\I$ and $\J$ are \emph{isomorphic} if there is a bijection $f:\bigcup\J\to \bigcup\I$ such that: 
$$\forall_{A\subseteq\bigcup\I}\ \left(A\in\I\ \Longleftrightarrow\ f^{-1}[A]\in\J\right);$$
\item \emph{$\J$ contains an isomorphic copy of $\I$} ($\I\sqsubseteq\J$) if there is a bijection $f:\bigcup\J\to \bigcup\I$ such that: 
$$\forall_{A\subseteq\bigcup\I}\ \left(A\in\I\ \Longrightarrow\ f^{-1}[A]\in\J\right);$$
\item the \emph{Fubini product of $\I$ and $\J$} is an ideal given by:
$$\I\otimes\J=\left\{A\subseteq\bigcup\I\times\bigcup\J:\ \left\{x\in\bigcup\I:\ A_{(x)}\notin\J\right\}\in\I\right\},$$
where $A_{(x)}=\{y\in\bigcup\J:\ (x,y)\in A\}$.
\end{itemize}
Isomorphisms of ideals have been deeply studied for instance in \cite{Tryba} (see also \cite{Farah}), while the preorder $\sqsubseteq$ is examined in \cite{Kat}, \cite{Debs} and \cite{Reclaw}. 

Inspired by M. Kat\v{e}tov (cf. \cite[p. 240]{Katetov}), G. Debs and J. Saint Raymond defined ideals $\text{Fin}_\alpha$ such that $\rk(\fin_\alpha)=\alpha$, for all $0<\alpha<\omega_1$. In the case of finite ranks we have $\fin_1=\fin=[\omega]^{<\omega}$ and $\fin_{n+1}=\fin^{n+1}=\fin\otimes\fin^n$. The mentioned conjecture of G. Debs and J. Saint Raymond from 2009 is the following:

\begin{conjecture}[{\cite[Conjecture 7.8]{Debs}}]
\label{hip}
Let $\I$ be an analytic ideal. Then $\rk(\mathcal{I})\geq\alpha$ if and only if $\mathcal{I}$ contains an isomorphic copy of $\text{Fin}_\alpha$.
\end{conjecture}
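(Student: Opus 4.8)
Since the abstract announces a negative answer, the real target is not to prove Conjecture~\ref{hip} but to refute it, and more precisely to refute its forward implication at $\alpha=3$. The backward implication is harmless: if $\fin_\alpha\sqsubseteq\I$ then in particular $\fin_\alpha\le_K\I$ (the witnessing bijection is a Kat\v{e}tov reduction), and the rank is monotone with respect to the Kat\v{e}tov order. Indeed, if $f\colon\bigcup\J\to\bigcup\I$ witnesses $\I\le_K\J$, then $G(A)=f^{-1}[A]$ is a continuous map $\cP(\bigcup\I)\to\cP(\bigcup\J)$ sending $\I$ into $\J$ and $\I^\star$ into $\J^\star$ (the latter because $f^{-1}[A^c]=(f^{-1}[A])^c$), so pulling back any $\bf{\Sigma^0_{1+\beta}}$ separator for $\J$ gives one for $\I$; hence $\rk(\I)\le\rk(\J)$. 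Combined with $\rk(\fin_\alpha)=\alpha$ this yields $\fin_\alpha\sqsubseteq\I\Rightarrow\rk(\I)\ge\alpha$. So I would concentrate entirely on producing a single Borel ideal $\I$ with $\rk(\I)>2$ and $\fin^3\not\sqsubseteq\I$.

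The plan is to build $\I$ with two independently verifiable features. First, to force $\rk(\I)>2$ I would arrange that $\fin^3\le_K\I$ by a \emph{non-injective} Kat\v{e}tov reduction, which by the monotonicity above immediately yields $\rk(\I)\ge\rk(\fin^3)=3$. This reduces the whole problem to \emph{separating the Kat\v{e}tov order from the isomorphic-copy order for $\fin^3$}: I want $\fin^3\le_K\I$ while no bijection realises the reduction. A non-injective $f$ may fold many points of $\bigcup\I$ onto one point of $\omega^3$, so the reduction can go through even when $\bigcup\I$ lacks the room to host a bijective copy; exploiting exactly this asymmetry is the design principle for $\I$. (Should this route prove too generous---if at level $3$ a Kat\v{e}tov reduction already forced a bijective copy---I would instead bound the rank from below directly, diagonalising against every $\bf{\Sigma^0_3}$ set to show none separates $\I$ from $\I^\star$, while exhibiting an explicit $\bf{\Sigma^0_4}$ separator so that $\I$ is Borel of rank exactly $3$.)

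Second, and this is where the work lies, I would prove $\fin^3\not\sqsubseteq\I$ by isolating an invariant that any bijective copy of $\fin^3$ forces and that $\I$ fails. A bijection $g\colon\bigcup\I\to\omega^3$ realising $\fin^3\sqsubseteq\I$ induces the partition $\{D_k\}_{k\in\omega}$ of $\bigcup\I$ with $D_k=g^{-1}[\{k\}\times\omega\times\omega]$, and since the trace of $\fin^3$ on each plane $\{k\}\times\omega\times\omega$ is a copy of $\fin^2$, every restriction $\I\restriction D_k$ must contain an isomorphic copy of $\fin^2$. Thus $\fin^3\sqsubseteq\I$ entails a splitting of $\bigcup\I$ into infinitely many pieces each hosting $\fin^2$. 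I would therefore engineer $\I$ so that no such splitting exists---so that the $\fin^3$-complexity responsible for the rank is concentrated (folded, as in the Kat\v{e}tov reduction above) rather than spread across infinitely many disjoint pieces---thereby ruling out every relabelling of $\bigcup\I$ as $\omega^3$.

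The main obstacle is precisely this last step: excluding \emph{all} bijections $\bigcup\I\to\omega^3$ simultaneously. Unlike the Kat\v{e}tov reduction, which I get to construct, a hypothetical isomorphic copy may relabel $\bigcup\I$ arbitrarily wildly, so the argument cannot rely on any normal form for $g$ and must instead rest on an invariant robust under every permutation. Reconciling the two demands---enough (folded) $\fin^3$-structure to push the rank above $2$, yet a global scarcity of disjoint $\fin^2$-rich pieces to block a bijective copy---is the delicate balancing act, and I expect the verification that the chosen invariant is genuinely $\sqsubseteq$-monotone, and correctly computed for the specific $\I$, to be the technical heart of the proof.
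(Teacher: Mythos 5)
Your reading of the task is correct (the conjecture is to be refuted at $\alpha=3$ by an ideal of rank $>2$ omitting copies of $\fin^3$), and your argument for the backward implication via continuity of $A\mapsto f^{-1}[A]$ is sound and matches what is known. However, your primary construction plan is built on an impossibility: you cannot separate the Kat\v{e}tov order from the isomorphic-copy order for $\fin^3$, because for every $n\in\omega$ and every ideal $\I$ one has $\fin^n\leq_K\I$ if and only if $\fin^n\sqsubseteq\I$ --- this is exactly the fact (Example 4.1 of the cited paper on the Kat\v{e}tov order) that the present paper quotes in its introduction. So any ``folding'' reduction $\fin^3\leq_K\I$, injective or not, automatically yields a bijective copy of $\fin^3$ inside $\I$, and the asymmetry you intend to exploit does not exist. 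Consequently everything rests on your parenthetical fallback: diagonalize directly against every $\bf{\Sigma^0_3}$ separator. That is indeed the route the paper takes (Lemma \ref{3}), but in your proposal it is an empty placeholder --- no ideal is constructed, and the diagonalization itself (writing an arbitrary $\bf{\Sigma^0_3}$ superset of the ideal in terms of finite sets $F_{i,j,k},G_{i,j,k}$ and inductively building a member of the ideal whose complement lands in the separator) is precisely the technical heart that is missing.

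The second half of your plan also points at the wrong invariant. The paper's counterexample is $\CEX=\left(\{\emptyset\}\otimes\fin^3\right)\cap\left(\fin^3\otimes\{\emptyset\}\right)$ on $\omega^4$, and it \emph{does} admit a partition of its ground set into infinitely many pieces each hosting a copy of $\fin^2$: on each piece $\{i\}\times\omega^3$ the ideal $\CEX$ restricts to an isomorphic copy of $\fin^3$, which contains copies of $\fin^2$ (again by the Kat\v{e}tov equivalence, since the projection witnesses $\fin^2\leq_K\fin^3$). So the failure of your splitting invariant is not necessary for blocking bijective copies of $\fin^3$, and demanding it would be a much stronger requirement than needed --- one that, for all your outline shows, may be incompatible with having rank $>2$. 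What actually blocks $\fin^3\sqsubseteq\CEX$ in the paper (Lemma \ref{2}) is the interplay of the two differently-oriented smallness conditions defining $\CEX$: given a hypothetical bijection $f:\omega^4\to\omega^3$, one analyzes the fibers $\{(i,j,k)\}\times\omega$ met by infinitely many images $f[\{(n,m)\}\times\omega]$ (the set $T$) and the fibers covered by finitely many such images (the set $S$), and in each case produces a set in $\fin^3$ whose $f$-image violates one of the two conditions, using tallness of $\fin^2$ along the way. None of this structure is recoverable from your proposal, so it has a genuine gap both in the rank lower bound and in the non-containment argument.
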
 

In \cite{IndLim} we have shown that the above is false for $\alpha=\omega$ and proposed new versions of $\fin_\alpha$ in the case of limit ordinals, for which the conjecture could remain true. Moreover, the implication $\Longleftarrow$ is true in general. In the next section we show that Conjecture \ref{hip} is false in the case of $\alpha=3$ by constructing a $\bf{\Sigma^0_6}$ ideal of rank $>2$ which does not contain an isomorphic copy of $\text{Fin}^3$. 

Our example is the best possible in the sense that Conjecture \ref{hip} is true for $\alpha=1$ and $\alpha=2$ as shown in \cite[Theorem 7.5]{Debs} and \cite[Theorem 4]{Reclaw}. What is interesting, although both results prove almost the same thing, they use completely different methods -- the methods of G. Debs and J. Saint Raymond from \cite{Debs} allowed to extend some of their results for the case of higher ranks and develop a bigger theory, while the method of M. Laczkovich and I. Rec\l aw from \cite{Reclaw} gives us more information about the separating set. The latter method has been used for instance in \cite{b} in a completely different context associated to some variants of the bounding number. 

It is also worth mentioning that a property of ideals can often be expressed by finding a critical ideal in sense of the Kat\v{e}tov preorder $\leq_K$ with respect to this property (recall that $\I\leq_K\J$, if there is $f:\bigcup\J\to \bigcup\I$, not necessary a bijection, such that $f^{-1}[A]\in\J$ for each $A\in\I$). This approach proved to be effective in many papers including \cite{Hrusak}, \cite{Hrusak2}, \cite{WR} and \cite{Meza}. By \cite[Example 4.1]{Kat}, $\fin^n\sqsubseteq\I$ if and only if $\fin^n\leq_K\I$, for any $n\in\omega$ and any ideal $\I$. Thus, Conjecture \ref{hip} can be seen as an attempt to characterize ideals of given rank in the above mentioned way.

Let us point out a connection of the above conjecture to descriptive complexity of ideals. Each ideal $\Fin_n$, for $n\in\omega$, is $\bf{\Sigma^0_{2n}}$. By a result of G. Debs and J. Saint Raymond from \cite{Debs2}, any ideal $\I$ containing an isomorphic copy of $\Fin_n$ cannot be $\Pi^0_{2n}$. Thus, Conjecture \ref{hip} would lead to an interesting result, by giving lower estimate of Borel complexity of ideals of a given finite rank. Although we show that Conjecture \ref{hip} is false, the general question about minimal complexity of an ideal of a given rank remains open (see \cite[Conjecture 8.5]{Debs}). It is only known that no $\Pi^0_{4}$ ideal can have rank $>1$ (cf. \cite[Theorem 9.1]{Debs}).

In \cite{Debs} and \cite{Reclaw} it is shown that ranks of analytic ideals are important for studying ideal pointwise limits of sequences of continuous functions: $\rk(\mathcal{I})=\alpha$ if and only if the family of all $\I$-pointwise limits of continuous real-valued functions defined on a given zero-dimensional Polish space is equal to the family of all functions of Borel class $\alpha$ (the definition of $\I$-pointwise limit can be found in \cite{Debs} or \cite{Reclaw}). Conjecture \ref{hip} would imply that we can skip the assumption about zero-dimensionality in this result (cf. \cite[Corollary 7.6]{Debs}). In fact, studies of $\I$-pointwise limits of continuous functions were the main motivation for introducing ranks of ideals (however, earlier S. Solecki in \cite{Solecki} studied this notion in a different context).

\section{The counterexample}

We start this section by introducing the ideal which will be a counterexample for Conjecture \ref{hip}.

\begin{definition}
\label{def}
The \emph{Counterexample ideal} $\CEX$ is a collection of subsets of $\omega^4$ consisting of all $A\subseteq\omega^4$ such that there is $n\in\omega$ satisfying:
\begin{itemize}
\item $\forall_{i<n}\ A_{(i)}\in\fin^3$;
\item $\forall_{i\geq n}\ \exists_{k\in\omega}\ \forall_{j\geq k}\ \exists_{m\in\omega}\ A\cap\left(\bigcup_{l\geq m}\{(i,j,l)\}\times\omega\right)\text{ is finite}. $
\end{itemize}
Equivalently, $\CEX=\left(\{\emptyset\}\otimes\fin^3\right)\cap\left(\fin^3\otimes\{\emptyset\}\right)$.
\end{definition}

\begin{proposition}
\label{1}
$\CEX$ is a Borel ideal of class $\bf{\Sigma^0_6}$. 
\end{proposition}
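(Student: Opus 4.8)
The plan is to establish the two halves of the statement by different routes: the ideal axioms are cleanest through the product description $\CEX=(\{\emptyset\}\otimes\fin^3)\cap(\fin^3\otimes\{\emptyset\})$, while the complexity bound is cleanest through the quantifier description in Definition \ref{def}. For the ideal structure, I would check directly that each of the two factors is an ideal on $\omega^4$: for $\{\emptyset\}\otimes\fin^3$ this is immediate from $(A\cup B)_{(i)}=A_{(i)}\cup B_{(i)}$ together with $\fin^3$ being an ideal, and for $\fin^3\otimes\{\emptyset\}$ it follows because the section-support $\{(i,j,l):A_{(i,j,l)}\neq\emptyset\}$ is monotone and sends finite unions to finite unions, again using that $\fin^3$ is an ideal. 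Both factors contain every finite subset of $\omega^4$ and are proper (since $\omega^3\notin\fin^3$), and the intersection of two ideals on the same set is an ideal; hence $\CEX$ is an ideal. The Borel claim will then reduce entirely to locating $\CEX$ at level $\bf{\Sigma^0_6}$.

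The crucial modeling choice for the complexity is to read $\CEX$ via the \emph{first} description, so as to avoid the naive bound: the clause ``$A_{(i)}\in\fin^3$ for all $i$'' is a countable intersection of $\bf{\Sigma^0_6}$ sets and only yields $\bf{\Pi^0_7}$, whereas the $\exists n$ in Definition \ref{def} lets us split off the expensive $\fin^3$-requirement to finitely many sections. Concretely I would write $\CEX=\bigcup_{n\in\omega}C_n$, where $C_n$ collects those $A$ with $A_{(i)}\in\fin^3$ for $i<n$ and with the second bullet holding for $i\geq n$, and then show each $C_n\in\bf{\Sigma^0_6}$, after which closure of $\bf{\Sigma^0_6}$ under countable unions finishes the argument. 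Since each section map $A\mapsto A_{(i)}$ is continuous and $\fin^3=\Fin_3$ is $\bf{\Sigma^0_6}$, every set $\{A:A_{(i)}\in\fin^3\}$ is $\bf{\Sigma^0_6}$, and the finite intersection over $i<n$ stays $\bf{\Sigma^0_6}$.

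The heart of the proof is pinning the tail condition at level $\bf{\Sigma^0_4}$. For fixed $i$ the second bullet is exactly $A_{(i)}\in\fin^2\otimes\{\emptyset\}$, and I would verify by a direct quantifier count that this set is $\bf{\Sigma^0_4}$ in $A$: for fixed $i,j,m$ the clause ``$\{(l,p):l\geq m,\ (i,j,l,p)\in A\}$ is finite'' is $\bf{\Sigma^0_2}$ (it reads $\exists N\,\forall l,p$ of a clopen matrix); prepending $\exists m$ keeps it $\bf{\Sigma^0_2}$, then $\forall j\geq k$ gives $\bf{\Pi^0_3}$, and finally $\exists k$ gives $\bf{\Sigma^0_4}$. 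Equivalently, $\fin^2\otimes\{\emptyset\}$ sits at the same level $\bf{\Sigma^0_4}$ as $\fin^2=\Fin_2$, because the support quantifier contributed by the factor $\{\emptyset\}$ is absorbed into the finiteness clause and does not raise the level. Hence $\{A:A_{(i)}\in\fin^2\otimes\{\emptyset\}\}$ is $\bf{\Sigma^0_4}$, and the countable intersection $\bigcap_{i\geq n}\{A:A_{(i)}\in\fin^2\otimes\{\emptyset\}\}$ is $\bf{\Pi^0_5}$, thus $\bf{\Sigma^0_6}$.

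Assembling, $C_n$ is the intersection of a $\bf{\Sigma^0_6}$ set (the part for $i<n$) with a $\bf{\Pi^0_5}\subseteq\bf{\Sigma^0_6}$ set (the tail part), hence $\bf{\Sigma^0_6}$, and $\CEX=\bigcup_n C_n\in\bf{\Sigma^0_6}$. I expect the only genuinely delicate step to be this complexity bookkeeping for the tail: one must make sure that the three quantifiers $\exists k\,\forall j\,\exists m$ together with the finiteness clause do not exceed $\bf{\Sigma^0_4}$ — in particular that the existential over the last coordinate is swallowed by the finiteness clause rather than adding an alternation — so that the countable intersection over $i\geq n$ remains inside $\bf{\Pi^0_5}\subseteq\bf{\Sigma^0_6}$. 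Everything else is routine closure behaviour of the Borel hierarchy.
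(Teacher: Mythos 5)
Your proposal is correct and takes essentially the same route as the paper's proof: both read the complexity off the quantifier form of Definition \ref{def}, using the two key facts that $\fin^3$ is $\bf{\Sigma^0_6}$ and that the inner finiteness condition is $\bf{\Sigma^0_2}$, and then perform the (routine) level counting. Your explicit $\bf{\Sigma^0_4}$/$\bf{\Pi^0_5}$ bookkeeping for the tail condition and your verification of the ideal axioms via the product description $(\{\emptyset\}\otimes\fin^3)\cap(\fin^3\otimes\{\emptyset\})$ simply fill in details the paper dismisses as easy.
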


\begin{proof}
It is easy to verify that $\CEX$ is indeed an ideal.

The Borel complexity of $\CEX$ follows from the facts that $\fin^3$ is $\bf{\Sigma^0_6}$ (see \cite[Proposition 6.4]{Debs}) and that for every $(i,j,m)\in\omega^3$ the map $f_{i,j,m}:\cP(\omega^4)\to\cP(\omega^4)$ given by $f_{i,j,m}(A)=A\cap\left(\bigcup_{l\geq m}\{(i,j,l)\times\omega\}\right)$ is continuous, so the set:
$$\left\{A\subseteq\omega^4:\ A\cap\left(\bigcup_{l\geq m}\{(i,j,l)\times\omega\}\right)\text{ is finite}\right\}=f^{-1}_{i,j,l}[[\omega^4]^{<\omega}]$$
is $\bf{\Sigma^0_2}$.
\end{proof}

\begin{lemma}
\label{2}
$\fin^3\not\sqsubseteq\CEX$.
\end{lemma}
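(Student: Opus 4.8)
The plan is to argue by contradiction, using the decomposition $\CEX=\mathcal{I}_1\cap\mathcal{I}_2$ with $\mathcal{I}_1=\{\emptyset\}\otimes\fin^3$ and $\mathcal{I}_2=\fin^3\otimes\{\emptyset\}$. Suppose a bijection $f\colon\omega^4\to\omega^3$ witnesses $\fin^3\sqsubseteq\CEX$, so that $f^{-1}[A]\in\mathcal{I}_1\cap\mathcal{I}_2$ for every $A\in\fin^3$. First I would unwind what each factor contributes. Writing $f_i\colon\omega^3\to\omega^3$ for the $i$-th row $f_i(j,l,s)=f(i,j,l,s)$, the demand $f^{-1}[A]\in\mathcal{I}_1$ for all $A\in\fin^3$ says precisely that every $f_i$ is a self-reduction of $\fin^3$ (that is, $f_i^{-1}[A]\in\fin^3$ whenever $A\in\fin^3$); in particular each $f_i$ carries $\fin^3$-positive sets to $\fin^3$-positive sets and cannot map any $\fin^3$-positive set into a $\fin^3$-small one. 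The demand $f^{-1}[A]\in\mathcal{I}_2$, on the other hand, is a condition on the \emph{column images} $R_{i,j,l}:=\{f(i,j,l,s):s\in\omega\}$: since $f$ is a bijection these sets partition $\omega^3$ into infinite pieces, and $f^{-1}[A]\in\mathcal{I}_2$ unwinds to $\{(i,j,l):R_{i,j,l}\cap A\neq\emptyset\}\in\fin^3$ for every $A\in\fin^3$. Throughout I will use the standard pushforward remark that, $f$ being a reduction, $f$ carries $\CEX$-positive sets to $\fin^3$-positive sets.

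The goal is then to manufacture a single $A\in\fin^3$ whose preimage is $\mathcal{I}_2$-positive (hence outside $\CEX$), which is the desired contradiction. I would build $A$ by a selection (marriage) argument. The self-reduction property of a fixed $f_i$ forbids a $\fin^3$-positive block of column images $R_{i,j,l}$ from being confined to any $\fin^3$-small set; since $D\times\omega\in\fin^3$ whenever $D\in\fin^2$, and since $f_i$-preimages of the degenerate small sets $\{a\}\times\omega^2$ (and of the ``flat'' small sets) must stay in $\fin^3$, this rules out the various ways in which the $R_{i,j,l}$ could be crammed into a thin family of vertical columns of $\omega^3$. Via the deficiency form of Hall's theorem (in its $\fin^2$/$\fin^3$ variant) this should yield, inside each fiber $i$, a $\fin^2$-positive set $G_i\subseteq\omega^2$ together with an injective choice of columns $c_{j,l}\in\mathrm{proj}_{12}(R_{i,j,l})$ for $(j,l)\in G_i$; picking one point $b_{j,l}\in R_{i,j,l}$ over each chosen column gives a set meeting a positive block of pieces while occupying each vertical column of $\omega^3$ at most once. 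If these per-fiber sets can be amalgamated over infinitely many $i$ into one set $A$ that still meets every vertical column only finitely often, then $A\in\fin^3$ while $\{(i,j,l):R_{i,j,l}\cap A\neq\emptyset\}\supseteq\bigsqcup_i(\{i\}\times G_i)$ is $\fin^3$-positive, so $f^{-1}[A]\notin\mathcal{I}_2$, a contradiction.

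The main obstacle is exactly this amalgamation, and it is where the two factors of $\CEX$ must be played off against each other: $\mathcal{I}_1$ forces each row $f_i$ to branch genuinely through the coordinates $(j,l,s)$, while $\mathcal{I}_2$ forces the global map to branch through $(i,j,l)$ and to collapse $s$, and reconciling both inside a three-coordinate target is what should be impossible. Concretely, the delicate point is to run the per-fiber selections simultaneously so that distinct fibers do not accumulate on common columns of $\omega^3$; I would do this by a fusion, treating the fibers $i$ in turn and, at stage $i$, performing the Hall selection into the complement of the columns already committed, where the self-reduction property (applied to the small cones $\{a\}\times\omega^2$, whose $f_i$-preimages are $\fin^3$-small) guarantees at each finite stage that the still-available columns remain rich enough to support a $\fin^2$-positive selection. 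Verifying that this bookkeeping keeps every vertical column of the final $A$ finite while preserving positivity of the index set is the crux; I expect the genuine difficulty of the whole proof to lie here, since it is precisely the feature that distinguishes the intersection $\mathcal{I}_1\cap\mathcal{I}_2$ from each factor separately (each of which \emph{does} contain a copy of $\fin^3$). Once the fusion is in place, the contradiction is immediate.
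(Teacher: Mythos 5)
Your unwinding of the two Fubini factors is correct, and your target --- a single $A\in\fin^3$ whose preimage has a $\fin^3$-positive set of touched columns, i.e.\ an $\mathcal{I}_2$-type failure --- is exactly the kind of contradiction the paper's proof ultimately produces. But the proposal stops where the proof would have to begin: its two central steps are asserted, not carried out. There is no citable ``deficiency form of Hall's theorem in its $\fin^2$/$\fin^3$ variant''; the existence, inside a row, of a $\fin^2$-\emph{positive} set of pieces admitting an injective choice of representative columns is itself a nontrivial combinatorial claim (positivity here is with respect to the Fubini ideal $\fin^2$, not mere infinitude, so standard infinite matching theorems say nothing), and you give no argument for it. The difficulty is real: one can show --- this is in effect the paper's first claim, that the set $T$ of columns of $\omega^4$ whose piece meets infinitely many columns of $\omega^3$ satisfies $T\times\omega\in\CEX$ --- that in each row the pieces with \emph{infinite} column-projection form a $\fin^2$-small set; so your Hall selection would have to be built almost entirely from pieces with finite projections, precisely the regime where greedy selection fails and a deficiency-type hypothesis would need verification. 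Finally, you yourself flag the amalgamation across rows as ``the crux'' where you ``expect the genuine difficulty of the whole proof to lie''; that assessment is accurate, and it means the proposal is a plan rather than a proof.

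Moreover, the one mechanism you do describe for the fusion does not work as stated. Each completed stage commits an \emph{infinite} set of columns (any $\fin^2$-positive $G_{i'}$ is infinite), so at stage $i$ the forbidden set $C_{<i}\subseteq\omega^2$ is infinite, and the forbidden region $C_{<i}\times\omega$ is $\fin^3$-small only if $C_{<i}\in\fin^2$. The self-reduction property you invoke (applied to cones $\{a\}\times\omega^2$, or to any $\fin^3$-small set) only forbids a positive block of pieces from being confined to a $\fin^3$-\emph{small} set of columns; it gives no information when $C_{<i}\notin\fin^2$, and nothing in your construction keeps the ranges of the earlier matchings $\fin^2$-small (an injective image of a $\fin^2$-positive set need not be in $\fin^2$). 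So ``the still-available columns remain rich enough'' is not a consequence of anything you have established, and later rows could in principle have all their pieces inside the committed region. For comparison, the paper sidesteps matchings entirely: it first proves two global smallness claims --- the claim about $T$ above, and then, with $Y=T\times\omega$, that the set $S$ of planes $(i,j)$ covered modulo $Y$ by finitely many images of columns of $\omega^3$ satisfies $S\times\omega^2\in\CEX$ (this step uses tallness of $\fin^2$) --- and then runs a single diagonalization that picks a \emph{finite} subset of each column of $\omega^3$, so the resulting set lies in $\fin^3$ automatically with no injectivity requirement, and meets every plane outside $S$ cofinally above every level, which yields the $\mathcal{I}_2$-failure you are after. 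Some substitute for those two preliminary smallness claims and for the finite-selection diagonalization is what your sketch is missing.
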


\begin{proof}
Assume to the contrary that there is a bijection $f:\omega^3\to\omega^4$ witnessing $\fin^3\sqsubseteq\CEX$. Denote:

\begin{equation*}
\begin{split}
T= & \left\{(i,j,k)\in\omega^3:\ (\{(i,j,k)\}\times\omega)\cap f[\{(n,m)\}\times\omega]\neq\emptyset\right.\\
& \left.\text{ for infinitely many }(n,m)\in\omega^2\right\}
\end{split}
\end{equation*}
and $Y=T\times\omega$.

\begin{claim}
$Y\in\CEX$.
\end{claim}

\begin{proof}
Assume to the contrary that $Y\notin\CEX$. Then $|T|=\omega$. Fix any bijections $g:\omega\to\omega^2$ and $h:\omega\to T$ and for each $n\in\omega$ pick a finite set $P_n\in[\omega^3]^{<\omega}$ such that:
\begin{itemize} 
\item $P_n\subseteq\{g(n)\}\times\omega$;
\item if $n'\leq n$ and $(\{h(n')\}\times\omega)\cap f[\{g(n)\}\times\omega]\neq\emptyset$ then $f[P_n]\cap(\{h(n')\}\times\omega)\neq\emptyset$.
\end{itemize}
Define $P=\bigcup_{n\in\omega}P_n$. Clearly, $P\in\fin^3$. However, $f[P]\notin\CEX$ as for every $(i,j,k)\in T$ the set $f[P]\cap(\{(i,j,k)\}\times\omega)$ is infinite and $Y=T\times\omega\notin\CEX$. This contradicts the choice of $f$.
\end{proof}

Define now:
\begin{equation*}
\begin{split}
S= & \left\{(i,j)\in\omega^2:\ \left(\{(i,j)\}\times\omega^2\right)\setminus Y\text{ is covered by finitely}\right.\\
& \left.\text{many sets of the form }f[\{(n,m)\}\times\omega]\right\}
\end{split}
\end{equation*}
and $Z=S\times\omega^2$. 

\begin{claim}
$Z\in\CEX$.
\end{claim}

\begin{proof}
Assume to the contrary that $Z\notin\CEX$. Then also $Z\setminus Y\notin\CEX$ (as $Y\in\CEX$). Consider the set: 
$$S'=\left\{(i,j,k)\in\omega^3:\ (i,j,k)\notin T\text{ and }(i,j)\in S\right\}.$$
Then $S'\times\omega=Z\setminus Y\notin\CEX$, so there has to be $i_0\in\omega$ such that $S'_{(i_0)}\notin\fin^2$ (as for each $A\subseteq\omega^3$ with $A_{(n)}\in\fin^2$ for every $n$ we have $A\times\omega\in\CEX$). Hence, $J=\{j\in\omega:\ (i_0,j,k)\in S'\text{ for infinitely many }k\}$ is infinite. Thus, for each $j\in J$ we can find $(n_j,m_j)\in\omega^2$ such that: 
$$\left\{k\in\omega: (i_0,j,k)\in S'\text{ and }|(\{(i_0,j,k)\}\times\omega)\cap f[\{(n_j,m_j)\}\times\omega]|=\omega\right\}$$
is infinite (as $(\{(i_0,j)\}\times\omega^2)\setminus Y$ is covered by finitely many sets of the form $f[\{(n,m)\}\times\omega]$).

Recall that the ideal $\fin^2$ is tall (see \cite[page 24]{Meza}), i.e., for each infinite $B\subseteq\omega^2$ there is an infinite $C\subseteq B$ with $C\in\fin^2$. Therefore, there is some $R\in\fin^2$ with $(n_j,m_j)\in R$ for infinitely many $j\in J$. Then $R\times\omega\in\fin^3$, but $(f[R\times\omega])_{(i_0)}\notin\fin^3$ and consequently $f[R\times\omega]\notin\CEX$. Again we obtain a contradiction with the choice of $f$.
\end{proof}

Since $Z\in\CEX$, the set $\omega^2\setminus S$ is infinite. Let $g:\omega\to\omega^2$ and $h:\omega\to \omega^2\setminus S$ be fixed bijections. For every $n\in\omega$ pick a finite set $G_n\in[\omega^3]^{<\omega}$ such that:
\begin{itemize} 
\item $G_n\subseteq\{g(n)\}\times\omega$;
\item if $n'\leq n$ and $((\{h(n')\}\times\omega^2)\setminus Y)\cap f[\{g(n)\}\times\omega]\neq\emptyset$ then $f[G_n]\cap((\{h(n')\}\times\omega^2)\setminus Y)\neq\emptyset$.
\end{itemize}
Define $G=\bigcup_{n\in\omega}G_n$. Obviously, $G\in\fin^3$. However, $f[G]\notin\CEX$ as $(S\times\omega^2)\cup Y\in\CEX$ and for every $(i,j)\notin S$ and $k\in\omega$ we have: 
$$\left|f[G]\cap\left(\bigcup_{l\geq k}\left(\{(i,j,l)\}\times\omega\right)\setminus Y\right)\right|=\omega$$ 
(since $\bigcup_{l<k}(\{(i,j,l)\}\times\omega)\setminus Y$ is covered by finitely many sets of the form $f[\{(n,m)\}\times\omega]$ while $(\{(i,j)\}\times\omega^2)\setminus Y$ is not). This contradicts the choice of $f$ and finishes the entire proof.
\end{proof}

\begin{lemma}
\label{3}
$\rk(\CEX)>2$.
\end{lemma}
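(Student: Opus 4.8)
The plan is to show directly that no $\bf{\Sigma^0_3}$ set separates $\CEX$ from its dual filter $\CEX^\star$. I would begin by recording the shape of both families. Writing $\I_1=\{\emptyset\}\otimes\fin^3$ and $\I_2=\fin^3\otimes\{\emptyset\}$, so that $\CEX=\I_1\cap\I_2$, one has $\CEX^\star=\I_1^\star\cap\I_2^\star$; explicitly, $A\in\CEX^\star$ iff every section $A_{(i)}$ lies in $(\fin^3)^\star$ \emph{and} the set of columns $\{(i,j,k)\}\times\omega$ not contained in $A$ belongs to $\fin^3$. Two preliminary remarks fix the difficulty. First, the ``$\exists i$'' set $\cS=\{A:\exists i\ A_{(i)}\in\cT\}$, where $\cT\in\bf{\Sigma^0_4}$ separates $\fin^3$ from $(\fin^3)^\star$, does separate $\CEX$ from $\CEX^\star$, so $\rk(\CEX)\le 3$ and the real content of the lemma is the matching lower bound. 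Second, this lower bound cannot be imported from a standard rank-$3$ ideal by a reduction: the obvious candidate $A\mapsto A\times\omega$ only reduces $\{\emptyset\}\otimes\fin^2$, an ideal of rank at most $2$, because making every first-section of the image lie in $\fin^3$ is compatible with the three-dimensional support lying in $\fin^3$ only when \emph{all} sections of $A$ are controlled, whereas a member of $\fin^3$ controls only cofinitely many of them. This is the very tension behind Lemma \ref{2}, so the argument must be direct.

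Accordingly, I would assume toward a contradiction that $\cS=\bigcup_{n}\cF_n$ is a $\bf{\Sigma^0_3}$ separator, with each $\cF_n$ a $\bf{\Pi^0_2}$ (hence $G_\delta$) set and $\cF_n\subseteq\cF_{n+1}$, and then manufacture a single $B\in\CEX^\star\cap\cS$, which is absurd since $\cS\cap\CEX^\star=\emptyset$. The engine is a fusion. I would build a scheme of finite approximations whose index runs through a two-dimensional grid mirroring the two quantifier blocks ``$\forall i\ge n$'' and ``$\forall j\ge k$'' of Definition \ref{def}. Along the first coordinate the approximations are steered so as eventually to enter, and remain in, one fixed piece $\cF_{n_0}$: the $\CEX$-sets obtained by ``filling'' the already decided sections are dense among the approximations and, being in $\cS$, are distributed over the increasing sequence $(\cF_n)$, so a Baire-category and pigeonhole choice isolates an index $n_0$, after which the countably many open requirements defining the $G_\delta$ set $\cF_{n_0}$ are met one at a time. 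Along the second coordinate the same approximations are arranged so that the limit $B$ has every section in $(\fin^3)^\star$ and the columns not contained in $B$ form a set in $\fin^3$; that is, $B\in\CEX^\star$.

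The step I expect to be the main obstacle is the interleaving of these two demands, and it is exactly where the value $3$ rather than $2$ appears. Driving $B$ into a single $G_\delta$ piece is one ``$\bf{\Pi^0_2}$'' task, which by itself is a rank-$2$ phenomenon; the surplus difficulty is that $\CEX^\star$ imposes \emph{two} independent cofiniteness conditions --- the sectionwise one inherited from $\I_1^\star$ and the columnwise one inherited from $\I_2^\star$ --- and the bookkeeping must secure both limiting conditions while never allowing a finite approximation to certify that $B$ avoids $\cF_{n_0}$. Concretely, one must guarantee that whatever finite information the set $\cS$ uses to commit to membership, that information remains consistent with pushing the undecided part of $B$ toward $\CEX^\star$ along \emph{both} layers at once. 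The second cofinite layer is precisely what a single countable union of $G_\delta$ sets cannot follow, and verifying that the two-dimensional diagonalization genuinely defeats every $\cF_n$ simultaneously is the technical heart of the argument. Once such a $B$ is produced, with $B\in\cF_{n_0}\subseteq\cS$ and $B\in\CEX^\star$, the contradiction is immediate and the lemma follows.
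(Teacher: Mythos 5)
Your global framing coincides with the paper's: write the putative $\bf{\Sigma^0_3}$ separator $\cS$ as a countable union of $\bf{\Pi^0_2}$ pieces (equivalently, in the normal form $\exists_i\forall_j\exists_k$ over clopen conditions) and manufacture a single set in $\CEX^\star\cap\cS$, contradicting separation. The gap is that the one step carrying all the difficulty is asserted rather than proved: you claim that ``a Baire-category and pigeonhole choice isolates an index $n_0$'' so that the fusion can enter and then stay inside the fixed $G_\delta$ piece $\cF_{n_0}$. No such easy choice is available. The sets of $\CEX$ compatible with your successive finite approximations need not concentrate in any single $\cF_n$: the witnessing index depends on the whole infinite set, not on finite approximations, so pigeonhole gives no uniform $n_0$; and each time you commit finite information to meet one open requirement of a candidate $\cF_{n_0}$, it may happen that no completion of the approximation compatible with your $\CEX^\star$-steering meets the next requirement, forcing the index upward forever. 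Ruling out this blocking scenario is precisely the content of the paper's Claim, which produces $i$, $i'$, $T\subseteq\sq(i')$, $t\in\omega^{i'}$, a finite set $B$ and an infinite sequence $(C_m)\subseteq\CEX$ whose witnesses cohere (conditions (a)--(d)); its proof is a long induction by contradiction which, assuming no such data exist, constructs $D=A(s_0,s_1,\ldots)\setminus\bigcup_{n}B_n\in\CEX$ with $I(D)=\emptyset$, i.e.\ $D\notin\cS$, contradicting $\CEX\subseteq\cS$. Nothing in your proposal substitutes for this; indeed you explicitly defer it as ``the technical heart of the argument,'' so what you have is a plan that locates the difficulty, not a proof.

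Two further points. First, your description wavers between two different contradictions: to contradict separation you must land the limit $B$ \emph{inside} some $\cF_{n_0}$ while $B\in\CEX^\star$; ``defeating every $\cF_n$ simultaneously'' with a set of $\CEX^\star$ yields no contradiction at all (a separator is supposed to omit $\CEX^\star$), whereas defeating every $\cF_n$ with a set of $\CEX$ is a different construction --- the one the paper runs inside the proof of its Claim. This slippage suggests the mechanism is not pinned down. Second, even granting a fixed $n_0$, you still owe the amalgamation step: in the paper's terms one must check that $X=\bigcup_m F_{i,m,k(C_m,i,m)}$ avoids every $G_{i,m,k(C_m,i,m)}$ \emph{and} belongs to $\CEX$ (so that $\omega^4\setminus X\in\CEX^\star\cap\cS$), which is exactly what conditions (b)--(d) of the Claim and the auxiliary sets $A(s)$, $\sq(i')$ are engineered for; your sketch contains no counterpart to this verification.
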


\begin{proof}
Suppose that $S\in\bf{\Sigma^0_3}$ is arbitrary such that $\CEX\subseteq S$. Then there are finite sets $F_{i,j,k}, G_{i,j,k}\subseteq\omega^4$, for $i,j,k\in\omega$, such that:
$$S=\left\{A\subseteq\omega^4:\ \exists_{i\in\omega}\ \forall_{j\in\omega}\ \exists_{k\in\omega}\ F_{i,j,k}\cap A=\emptyset\text{ and }G_{i,j,k}\subseteq A\right\}.$$

We will construct a set $X\in\CEX$ such that: 
$$\exists_{i\in\omega}\ \forall_{j\in\omega}\ \exists_{k\in\omega}\ F_{i,j,k}\subseteq X\text{ and }G_{i,j,k}\cap X=\emptyset.$$
This will finish the proof, since the above implies $\omega^4\setminus X\in\CEX^\star\cap S$ and consequently $S\cap\CEX^\star\neq\emptyset$.

We need to introduce some notation. For each $A\in S$ there is $I(A)\subseteq\omega$, $I(A)\neq\emptyset$ such that for each $i\in I(A)$ we have: 
$$\forall_{j\in\omega}\ \exists_{k\in\omega}\ F_{i,j,k}\cap A=\emptyset\text{ and }G_{i,j,k}\subseteq A.$$
Moreover, for each $A\in S$, $i\in I(A)$ and $j\in\omega$ let $k(A,i,j)\in\omega$ be such that $F_{i,j,k(A,i,j)}\cap A=\emptyset$ and $G_{i,j,k(A,i,j)}\subseteq A$.

For $s=(s_0,s_1,\ldots)\in\omega^{\omega}$ denote:
$$A(s)=\bigcup_{m\in\omega}\left(\left((\omega\setminus m)\times s_m\times\omega^2\right)\cup\left(\omega\times(\omega\setminus m)\times s_m\times\omega\right)\right)$$
(here and in the rest of this proof we use standard set-theoretic notation and identify $n\in\omega$ with the set $\{0,1,\ldots,n-1\}$). Observe that $A(s)\in\CEX$. Moreover, if $s,t\in\omega^{\omega}$ are such that $s_n\leq t_n$ for all $n$ then $A(s)\subseteq A(t)$ (we will use this observation without any reference). Put also $A((s_0,\ldots,s_m))=A((s_0,\ldots,s_m,0,0,\ldots))$ for each $s=(s_0,\ldots,s_m)\in\omega^{<\omega}$. For $i'\in\omega$ denote by $0_{i'}\in\omega^{i'}$ the sequence consisting of $i'$ zeros and define $\sq(i')=i'\times i'\times\omega^2$. Finally, for each $A\in S$, $i\in I(A)$ and $i',j\in\omega$ put:
$$p(A,i,i',j)=\min\left\{p\in\omega:\ G_{i,j,k(A,i,j)}\subseteq \sq(i')\cup A(0_{i'}^\frown(p))\right\}.$$
Note that the above is well defined as the set $G_{i,j,k(A,i,j)}$ is finite and:
$$\bigcup_{p\in\omega}\left(\sq(i')\cup A(0_{i'}^\frown(p))\right)=\omega^4.$$

\begin{claim}
There are $i,i'\in\omega$, $T\subseteq \sq(i')$, $t\in\omega^{i'}$, $B\in[\omega^4]^{<\omega}$ and an infinite sequence $(C_m)\subseteq\CEX$ such that for each $m\in\omega$ we have:
\begin{itemize}
\item[(a)] $i\in I(C_m)$;
\item[(b)] $C_m\cap \sq(i')=T$;
\item[(c)] $C_m\supseteq A(t^\frown(p_m))\setminus\left(B\cup\bigcup_{j<m}F_{i,j,k(C_j,i,j)}\right)$, where: 
$$p_m=\max(\{p(C_j,i,i',j):\ j<m\}\cup\{m\});$$
\item[(d)] $C_m\cap\left(B\cup\bigcup_{j<m}F_{i,j,k(C_j,i,j)}\right)=\emptyset$.
\end{itemize}
\end{claim}

\begin{proof}
Suppose otherwise towards contradiction. Then for each $i,i'\in\omega$, $T\subseteq \sq(i')$, $t\in\omega^{i'}$ and $B\in[\omega^4]^{<\omega}$ either there is no $C_0\in\CEX$ such that $i\in I(C_0)$, $C_0\cap\sq(i')=T$, $C_0\supseteq A(t)\setminus B$ and $C_0\cap B=\emptyset$ or there are $C_0,C_1,\ldots,C_n\in\CEX$ such that for each $m\leq n$ we have:
\begin{itemize}
\item $i\in I(C_m)$;
\item $C_m\cap \sq(i')=T$;
\item $C_m\supseteq A(t^\frown(p_m))\setminus\left(B\cup\bigcup_{j<m}F_{i,j,k(C_j,i,j)}\right)$, where: 
$$p_m=\max(\{p(C_j,i,i',j):\ j<m\}\cup\{m\});$$
\item $C_m\cap\left(B\cup\bigcup_{j<m}F_{i,j,k(C_j,i,j)}\right)=\emptyset$;
\end{itemize}
but there is no $C_{n+1}\in\CEX$ such that:
\begin{itemize}
\item $i\in I(C_{n+1})$;
\item $C_{n+1}\cap \sq(i')=T$;
\item $C_{n+1}\supseteq A(t^\frown(p_{n+1}))\setminus\left(B\cup\bigcup_{j\leq n}F_{i,j,k(C_j,i,j)}\right)$, where:
$$p_{n+1}=\max(\{p(C_j,i,i',j):\ j\leq n\}\cup\{n+1\});$$
\item $C_{n+1}\cap\left(B\cup\bigcup_{j\leq n}F_{i,j,k(C_j,i,j)}\right)=\emptyset$.
\end{itemize}

For each $n\in\omega$ we will inductively pick $i_n\in\omega$, $b_n\in\omega\setminus\{0\}$, $(A^m_n)_{m<b_n}\subseteq\CEX$, $B_n\in[\omega^4]^{<\omega}$, $s_n\in\omega$ and $A_n\in\CEX$ such that:
\begin{itemize}
\item[(a)] $i_n$ is minimal such that there is $A\in\CEX$ with:
\begin{itemize}
\item $i_n\in I(A)$;
\item $A\cap\sq(n)=A_{n-1}\cap\sq(n)$;
\item $A\supseteq A_{n-1}$;
\item $A\cap B_{n-1}=\emptyset$;
\end{itemize}
\item[(b)] for each $m<b_n$ we have:
\begin{itemize}
\item $i_n\in I(A^m_n)$;
\item $A^m_n\cap\sq(n)=A_{n-1}\cap\sq(n)$;
\item $A^m_n\supseteq A((s_0,\ldots,s_{n-1},p^m_n))\setminus \left(B_{n-1}\cup\bigcup_{j<m}F_{i_n,j,k(A^j_n,i_n,j)}\right)$ (here we put $p^m_n=\max(\{p(A^j_n,i_n,n,j):\ j<m\}\cup\{m\})$);
\item $A^m_n\cap\left(B_{n-1}\cup\bigcup_{j<m}F_{i_n,j,k(A^j_n,i_n,j)}\right)=\emptyset$;
\end{itemize}
\item[(c)] $B_n=B_{n-1}\cup\bigcup_{j<b_n}F_{i_n,j,k(A^j_n,i_n,j)}$; 
\item[(d)] $s_n=\max(\{p(A^j_n,i_n,n,j):\ j<b_n\}\cup\{b_n\})$;
\item[(e)] $A_n=A((s_0,\ldots,s_{n-1},s_n))\setminus B_n$;
\item[(f)] there is no $A\in\CEX$ such that: 
\begin{itemize}
\item $i_n\in I(A)$;
\item $A\cap \sq(n)=A_{n-1}\cap \sq(n)$;
\item $A\supseteq A_n$;
\item $A\cap B_n=\emptyset$;
\end{itemize} 
\end{itemize} 
(in the above we put $A_{-1}=B_{-1}=\emptyset$).

In the first induction step let $i_0\in\omega$ be minimal such that there is some $A\in\CEX$ with $i_0\in I(A)$ (observe that $i_0$ satisfies item (a) as $\sq(0)=\emptyset$). Applying our assumption to $i=i_0$, $i'=0$, $T=\emptyset$, $t=\emptyset$ and $B=\emptyset$, we get $b_0\in\omega\setminus\{0\}$ and the required in item (b) sequence $(A^m_0)_{m<b_0}\subseteq\CEX$ (since we have $A\in\CEX$ such that $i\in I(A)$, $A\cap\sq(i')=\emptyset=T$, $A\supseteq \emptyset=A(t)\setminus B$ and $A\cap B=\emptyset$). Define $B_0$, $s_0$ and $A_0$ according to items (c), (d) and (e) and observe that item (f) is satisfied by the choice of $(A^m_0)_{m<b_0}$.

In the $n$th induction step, if $i_r$, $b_r$, $s_r$, $(A^m_r)_{m<b_r}$, $B_r$ and $A_r$, for all $r<n$, are already defined, let $i_n\in\omega$ be as in item (a). Note that such $i_n$ exists as $A=A_{n-1}\in\CEX\subseteq S$ is a set satisfying $A\cap\sq(n)=A_{n-1}\cap\sq(n)$, $A\supseteq A_{n-1}$ and $A\cap B_{n-1}=\emptyset$ (hence, $i_n\leq\min I(A_{n-1})$). Using our assumption applied to $i=i_n$, $i'=n$, $t=(s_0,\ldots,s_{n-1})$, $T=A_{n-1}\cap \sq(n)$ and $B=B_{n-1}$ once again we get $b_n\in\omega$ and a finite sequence $(A^m_n)_{m<b_n}\subseteq\CEX$ with the property required in (b). Put $B_n$, $s_n$ and $A_n$ according to items (c), (d) and (e). Again item (f) is satisfied by the choice of $(A^m_n)_{m<b_n}$. This finishes the inductive construction.

Notice that the sequence $(B_n)$ is non-decreasing. The rest of this proof strongly relies on the observation that $A_k=A(s_0,\ldots,s_k)\setminus\bigcup_{n\in\omega}B_n$, for every $k\in\omega$. Indeed, $A_k\supseteq A(s_0,\ldots,s_k)\setminus\bigcup_{n\in\omega}B_n$ is obvious, so we only need to prove $A_k\subseteq A(s_0,\ldots,s_k)\setminus\bigcup_{n\in\omega}B_n$. Fix $x\in A_k=A(s_0,\ldots,s_k)\setminus B_k$ and suppose to the contrary that $x\notin A(s_0,\ldots,s_k)\setminus\bigcup_{n\in\omega}B_n$, i.e., $x\in B_{n}\setminus B_{n-1}\subseteq\bigcup_{j<b_n}F_{i_n,j,k(A^j_n,i_n,j)}$ for some $n>k$. Let $m<b_{n}$ be minimal such that $x\in F_{i_n,m,k(A^m_n,i_n,m)}$. Then:
\begin{equation*}
\begin{split}
x\in &  A((s_0,\ldots,s_{k}))\setminus \left(B_{n-1}\cup\bigcup_{j<m}F_{i_n,j,k(A^j_n,i_n,j)}\right)\subseteq \\
& A((s_0,\ldots,s_{n-1}))\setminus \left(B_{n-1}\cup\bigcup_{j<m}F_{i_n,j,k(A^j_n,i_n,j)}\right)\subseteq A^m_n,
\end{split}
\end{equation*}
which contradicts $F_{i_n,m,k(A^m_n,i_n,m)}\cap A^m_n=\emptyset$.

Observe that the sequence $(i_n)$ is unbounded. Indeed, we will show that given any $n<n'$ we have $i_n\neq i_{n'}$ (from which unboundedness of $(i_n)$ easily follows). Fix any pair $n<n'$. Note that
$A\cap\sq(n')=A_{n'-1}\cap\sq(n')$ implies:
\begin{equation*}
\begin{split}
A\cap\sq(n)= &  A_{n'-1}\cap\sq(n)=\left(A(s_0,\ldots,s_{n'-1})\setminus\bigcup_{n\in\omega}B_n\right)\cap\sq(n)= \\
& \left(A(s_0,\ldots,s_{n-1})\setminus\bigcup_{n\in\omega}B_n\right)\cap\sq(n)=A_{n-1}\cap\sq(n).
\end{split}
\end{equation*}
Similarly, $A\supseteq A_{n'-1}=A(s_0,\ldots,s_{n'-1})\setminus\bigcup_{n\in\omega}B_n$ implies the inclusion $A\supseteq A(s_0,\ldots,s_{n})\setminus\bigcup_{n\in\omega}B_n=A_{n}$ and $A\cap B_{n'-1}=\emptyset$ implies that $A\cap B_{n}=\emptyset$. Hence, by item (f) there is no $A\in\CEX$ such that $i_n\in I(A)$, $A\cap\sq(n')=A_{n'-1}\cap\sq(n')$, $A\supseteq A_{n'-1}$ and $A\cap B_{n'-1}=\emptyset$. On the other hand, item (a) says that $i_{n'}$ is minimal such that such $A$ exists. Thus, $i_n$ and $i_{n'}$ cannot be the same.

Consider the set $D=A(s_0,s_1,\ldots)\setminus\bigcup_{n\in\omega}B_n\in\CEX$. We will show that $I(D)=\emptyset$, which will contradict $\CEX\subseteq S$ and finish the proof. 

Fix any $j\in\omega$ and let $k\in\omega$ be such that $j<i_{k}$ (such $k$ exists as $(i_n)$ is unbounded). Clearly, $D\cap B_{k-1}=\emptyset$. Moreover, $D\supseteq A_{k-1}$, as:
$$D=A(s_0,s_1,\ldots)\setminus\bigcup_{n\in\omega}B_n\supseteq A(s_0,\ldots,s_{k-1})\setminus\bigcup_{n\in\omega}B_n=A_{k-1}.$$
Finally, $D\cap \sq(k)=A_{k-1}\cap \sq(k)$ since: 
\begin{equation*}
\begin{split}
D\cap \sq(k)= &  \left(A(s_0,s_1,\ldots)\setminus\bigcup_{n\in\omega}B_n\right)\cap \sq(k)= \\
& \left(A(s_0,\ldots,s_{k-1})\setminus\bigcup_{n\in\omega}B_n\right)\cap \sq(k)=A_{k-1}\cap \sq(k).
\end{split}
\end{equation*}
Thus, $j\notin I(D)$ by the definition of $i_k$ (item (a)). Since $j$ was arbitrary, we obtain $I(D)=\emptyset$. This finishes the claim.
\end{proof}

Let $i,i'\in\omega$, $T\subseteq \sq(i')$, $t\in\omega^{i'}$, $B\in[\omega^4]^{<\omega}$ and $(C_m)\subseteq\CEX$ be as in the above Claim. Define $X=\bigcup_{m\in\omega}F_{i,m,k(C_m,i,m)}$. Observe that $X\cap G_{i,m,k(C_m,i,m)}=\emptyset$ for all $m$. Indeed, fix $x\in X$ and suppose towards contradiction that $x\in G_{i,n,k(C_n,i,n)}$ for some $n\in\omega$. Denote by $m$ the minimal integer such that $x\in F_{i,m,k(C_{m},i,m)}$. There are three possibilities:
\begin{itemize}
\item The case $m=n$ cannot happen as we have $F_{i,n,k(C_{n},i,n)}\cap C_n=\emptyset$ and $G_{i,n,k(C_{n},i,n)}\subseteq C_n$.
\item The case $m<n$ is impossible since $F_{i,m,k(C_{m},i,m)}\cap C_n=\emptyset$ (by item (d)), while $G_{i,n,k(C_n,i,n)}\subseteq C_n$.
\item If $m>n$ then note that (b) and the definition of $p(C_n,i,i',n)$ imply:
\begin{equation*}
\begin{split}
x\in\  &  G_{i,n,k(C_n,i,n)}\subseteq C_n\cap\left(\sq(i')\cup A(0_{i'}^\frown(p(C_n,i,i',n))) \right)\subseteq  \\
& T\cup A(0_{i'}^\frown(p(C_n,i,i',n))).
\end{split}
\end{equation*}
Moreover, $x\notin B\cup\bigcup_{j<m}F_{i,j,k(C_j,i,j)}$ by $x\in G_{i,n,k(C_n,i,n)}\subseteq C_n$, $C_n\cap B=\emptyset$ (by (d)) and the choice of $m$. This in turn means that:
\begin{equation*}
\begin{split}
x\in\  &  \left(T\cup A(0_{i'}^\frown(p(C_n,i,i',n)))\right)\setminus \left(B\cup\bigcup_{j<m}F_{i,j,k(C_j,i,j)}\right)\subseteq \\
& T\cup \left(A(0_{i'}^\frown(p(C_n,i,i',n)))\setminus\left(B\cup\bigcup_{j<m}F_{i,j,k(C_j,i,j)}\right)\right)\subseteq C_{m}
\end{split}
\end{equation*}
(by (b) and (c)). The latter contradicts $F_{i,m,k(C_{m},i,m)}\cap C_{m}=\emptyset$. Hence, $m>n$ neither can hold.
\end{itemize}

Therefore, $i\in\omega$ is such that for each $m\in\omega$ there is $k=k(C_m,i,m)$ with $F_{i,m,k}\subseteq X$ and $G_{i,m,k}\cap X=\emptyset$. To finish the proof we need to show that $X\in\CEX$. 

We will show that $X\cap A(0_{i'}^\frown(m))\subseteq B\cup\bigcup_{j<m}F_{i,j,k(C_j,i,j)}$, for each $m$. Assume to the contrary that for some $m\in\omega$ there is $x\in X\cap A(0_{i'}^\frown(m))$ such that $x\notin B\cup\bigcup_{j<m}F_{i,j,k(C_j,i,j)}$ and let $m'\geq m$ be minimal such that $x\in F_{i,m',k(C_{m'},i,m')}$. Then:
$$x\in A(0_{i'}^\frown(m'))\setminus\left(B\cup\bigcup_{j<m'}F_{i,j,k(C_j,i,j)}\right)\subseteq C_{m'}.$$
(by item (c) and the fact that $m\leq m'$ implies $A(0_i^\frown(m))\subseteq A(0_i^\frown(m'))$). On the other hand, $F_{i,m',k(C_{m'},i,m')}\cap C_{m'}=\emptyset$. This contradiction proves that $X\cap A(0_{i'}^\frown(m))\subseteq B\cup\bigcup_{j<m}F_{i,j,k(C_j,i,j)}$.

Since $B\cup\bigcup_{j<m}F_{i,j,k(C_j,i,j)}$ is finite, for each $m$, the rest of the proof follows from the previous paragraph and an easy observation that each set $Y\subseteq\omega^4$ with $Y\cap A(0_{i'}^\frown(m))$ finite for all $m$, belongs to the ideal $\CEX$. Indeed, for each such set $Y$ there is $n=i'$ such as in Definition \ref{def}, i.e.,:
\begin{itemize}
\item if $i''\geq i'=n$ and $j\in\omega$ then: 
$$Y\cap\left(\{(i'',j)\}\times\omega^2\right)\subseteq Y\cap A(0_{i'}^\frown(j+1))\in[\omega^4]^{<\omega};$$
\item if $i''<i'=n$, $j\geq i'$ and $k\in\omega$ then:
$$Y\cap\left(\{(i'',j,k)\}\times\omega\right)\subseteq Y\cap A(0_{i'}^\frown(k+1))\in[\omega^4]^{<\omega}.$$
\end{itemize}
\end{proof}

\begin{theorem}
There is a $\bf{\Sigma^0_6}$ ideal of rank $>2$ not containing an isomorphic copy of $\fin^3$.
\end{theorem}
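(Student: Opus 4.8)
The plan is to take the Counterexample ideal $\CEX$ of Definition \ref{def} as the witness and simply verify that it simultaneously enjoys all three properties demanded by the theorem, each of which has already been isolated in one of the preceding results. No new argument is required here: the statement is exactly the conjunction of Proposition \ref{1}, Lemma \ref{2} and Lemma \ref{3}, so the proof is a matter of assembling them.

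Concretely, I would first invoke Proposition \ref{1} to record that $\CEX$ is a Borel ideal of class $\mathbf{\Sigma^0_6}$. Next, Lemma \ref{3} yields $\rk(\CEX)>2$, so $\CEX$ has rank strictly greater than $2$. Finally, Lemma \ref{2} asserts $\fin^3\not\sqsubseteq\CEX$, which, unwinding the definition of $\sqsubseteq$ given in the introduction, says precisely that $\CEX$ does not contain an isomorphic copy of $\fin^3$. Putting these three facts together exhibits $\CEX$ as a $\mathbf{\Sigma^0_6}$ ideal of rank $>2$ failing to contain an isomorphic copy of $\fin^3$, which is exactly the assertion of the theorem.

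Since all of the genuine difficulty has already been absorbed into the three preceding results — most of it into the long inductive construction carried out in Lemma \ref{3} — there is no real obstacle remaining at this final stage; the only thing to be careful about is matching the definitions (in particular that $\fin^3\not\sqsubseteq\CEX$ is literally the negation of ``$\CEX$ contains an isomorphic copy of $\fin^3$''). It is worth emphasizing what this packaged statement delivers: recalling that $\fin_3=\fin^3$, the ideal $\CEX$ refutes the $\Longrightarrow$ implication of Conjecture \ref{hip} at $\alpha=3$, since it has rank $\geq 3$ yet contains no copy of $\fin_3$. Together with the fact that the $\Longleftarrow$ implication holds in general and that the conjecture is true for $\alpha=1,2$ (by \cite[Theorem 7.5]{Debs} and \cite[Theorem 4]{Reclaw}), this singles out $\alpha=3$ as the least value at which the conjecture fails, so the example is optimal in the sense discussed in the introduction.
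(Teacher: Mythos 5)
Your proposal is correct and coincides with the paper's own proof, which likewise just cites Proposition \ref{1}, Lemma \ref{2} and Lemma \ref{3} to conclude that $\CEX$ witnesses the theorem. The extra remarks on how this refutes Conjecture \ref{hip} at $\alpha=3$ are accurate but not part of the proof itself.
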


\begin{proof}
$\CEX$ is a good example as shown in Proposition \ref{1} and Lemmas \ref{2} and \ref{3}.
\end{proof}


\bibliographystyle{amsplain}
\bibliography{Ranks-references}

\providecommand{\bysame}{\leavevmode\hbox to3em{\hrulefill}\thinspace}
\providecommand{\MR}{\relax\ifhmode\unskip\space\fi MR }
\providecommand{\MRhref}[2]{%
  \href{http://www.ams.org/mathscinet-getitem?mr=#1}{#2}
}
\providecommand{\href}[2]{#2}
\begin{thebibliography}{10}

\bibitem{Kat}
Pawe\l{} Barbarski, Rafał Filip\'{o}w, Nikodem Mro\.{z}ek, and Piotr Szuca,
  \emph{When does the {K}atětov order imply that one ideal extends the
  other?}, Colloquium Mathematicum \textbf{130} (2013), no.~1, 91--102.

\bibitem{Debs}
Gabriel Debs and Jean Saint~Raymond, \emph{Filter descriptive classes of
  {B}orel functions}, Fundamenta Mathematicae \textbf{204} (2009), no.~3,
  189--213.

\bibitem{Debs2}
\bysame, \emph{A combinatorial property of {F}réchet iterated filters},
  Topology and its Applications \textbf{159} (2012), 2205–2227.

\bibitem{Farah}
Ilijas Farah, \emph{Analytic quotients: theory of liftings for quotients over
  analytic ideals on the integers}, Mem. Amer. Math. Soc. \textbf{148} (2000),
  no.~702, xvi+177.

\bibitem{b}
R.~Filip\'{o}w and A.~Kwela, \emph{Yet another ideal version of the bounding
  number}, Journal of Symbolic Logic (to appear).

\bibitem{Hrusak}
Michael Hrušák, \emph{Combinatorics of filters and ideals}, Contemporary
  Mathematics \textbf{533} (2011), 29--69.

\bibitem{Hrusak2}
\bysame, \emph{Katětov order on borel ideals}, Archive for Mathematical Logic
  \textbf{56} (2017), 831--847.

\bibitem{Katetov}
M.~Katětov, \emph{On descriptive classification of functions}, General
  Topology and its Relations to Modern Analysis and Algebra, Academia
  Publishing House of the Czechoslovak Academy of Sciences, 1972, pp.~235--242.

\bibitem{WR}
Adam Kwela, \emph{A note on a new ideal}, Journal of Mathematical Analysis and
  Applications \textbf{430} (2015), no.~2, 932--949.

\bibitem{IndLim}
Adam Kwela, \emph{Inductive limits of ideals}, Topology and its Applications
  \textbf{300} (2021), 107798.

\bibitem{Tryba}
Adam Kwela and Jacek Tryba, \emph{Homogeneous ideals on countable sets}, Acta
  Mathematica Hungarica \textbf{151} (2016), 139--161.

\bibitem{Reclaw}
Miklós Laczkovich and Ireneusz Recław, \emph{Ideal limits of sequences of
  continuous functions}, Fundamenta Mathematicae \textbf{203} (2009), 39--46.

\bibitem{Meza}
D.~Meza-Alc\'{a}ntara, \emph{Ideals and filters on countable set}, Ph.D.
  thesis, Universidad Nacional Aut\'{o}noma de M\'{e}xico, 2009.

\bibitem{Solecki}
Sławomir Solecki, \emph{Filters and sequences}, Fundamenta Mathematicae
  \textbf{163} (2000), no.~3, 215--228.

\end{thebibliography}

\end{document}